\def\Ddots{\mathinner{\mkern1mu\raise\p@
\vbox{\kern7\p@\hbox{.}}\mkern2mu
\raise4\p@\hbox{.}\mkern2mu\raise7\p@\hbox{.}\mkern1mu}}
\newtheorem{theorem}{Theorem}[section]
\theoremstyle{definition}
\newtheorem{definition}[theorem]{Definition}
\newtheorem{remark}[theorem]{Remark}
\begin{document}
	
	\title{The finite products of shifted primes and Moreira's Theorem}

	\date{}
	\author{Pintu Debnath
		\footnote{Department of Mathematics,
			Basirhat College,
			Basirhat-743412, North 24th parganas, West Bengal, India.\hfill\break
			{\tt pintumath1989@gmail.com}}
	}
	\maketitle

\begin{abstract}
Let $r\in\mathbb{N}$ and $\mathbb{N}=\bigcup_{i=1}^{r}C_{i}$. Do there exist $x,y\in\mathbb{N}$ and $i\in\left\{1,2,\ldots,r\right\}$ such that $\left\{x,y,xy,x+y\right\}\subseteq C_{i}$? This is still an unanswered question asked by N. Hindman. Joel Moreira in [Annals of Mathematics 185 (2017) 1069-1090] established a partial answer to this question and proved that for infinitely many $x,y\in\mathbb{N}$, $\left\{x,xy,x+y\right\}\subseteq C_{i}$ for some $i\in\left\{1,2,\ldots,r\right\}$, which is called Moreira's Theorem. Recently,  H. Hindman and D. Strauss established a refinement of Moreira's Theorem and proved that for infinitely many $y$, $\left\{x\in\mathbb{N}:\left\{x,xy,x+y\right\}\subseteq C_{i}\right\}$ is a piecewise syndetic set. In this article, we will prove infinitely many $y\in FP\left(\mathbb{P}-1\right)$  such that $\left\{x\in\mathbb{N}:\left\{xy,x+f(y):f\in F\right\}\subseteq C_{i}\right\}$ is piecewise syndetic, where $F$ is a finite subset of $x\mathbb{Z}\left[x\right]$. We denote $\mathbb{P}$ is the set of prime numbers in $\mathbb{N}$ and $FP\left(\mathbb{P}-1\right)$ is the set of all finite products of distinct elements of  $\mathbb{P}-1$.

\end{abstract}

\textbf{Keywords:}  Piecewise syndetic set, $IP_{r}$-set, Moreira's Theorem, Polynomial van der Waerden’s Theorem.

	\textbf{MSC 2020:} 05D10, 22A15, 54D35

\section{Introduction}

We start this introductory section with the statement of  Moreira's Theorem:

\begin{theorem}\label{Moreira's Theorem}\cite[Corollary 1.5]{M17}
    For any finite coloring of $\mathbb{N}$ there exist infinitely many $x,y\in\mathbb{N}$ such that $\left\{x,xy,x+y\right\}$ is monochromatic.
\end{theorem}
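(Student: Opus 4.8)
The plan is to reduce Moreira's Theorem to two partition-regular phenomena that can be handled separately and then coupled: a multiplicative one that controls the product $xy$, and an additive one that controls the sum $x+y$. Since centrality is partition regular, some colour class $C$ is multiplicatively central, i.e. a member of a minimal idempotent of the semigroup $(\beta\mathbb{N},\cdot)$; equivalently one could start from the fact that some colour class is multiplicatively piecewise syndetic. Multiplicative centrality is exactly what lets one produce, ahead of time, a large reservoir of admissible shifts: I would aim to show that there are infinitely many $y$ — indeed a set of $y$'s carrying multiplicative $FP$-structure — for which the dilated set $D_y := \{x\in\mathbb{N} : xy\in C\}$, and with more care $D_y\cap C$, is additively piecewise syndetic.

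Next, fixing such a $y$, I would feed $D_y\cap C$ into van der Waerden's theorem — or, to obtain the stronger $x+f(y)$ version with $f$ ranging over a finite $F\subseteq x\mathbb{Z}[x]$, into the polynomial van der Waerden theorem of Bergelson and Leibman, which places monochromatic polynomial configurations inside any piecewise syndetic set. This produces $x$ with $x\in C$, $x+y\in C$ and $xy\in C$ simultaneously; letting $y$ run over the infinitely many admissible shifts gives the conclusion of Theorem~\ref{Moreira's Theorem}, and carrying "piecewise syndetic" rather than merely "nonempty" through each step recovers the Hindman--Strauss refinement and the $x+f(y)$ generalization. To force the shift into the thin set $FP(\mathbb{P}-1)$, I would use that, by Dirichlet's theorem on primes in arithmetic progressions, $\mathbb{P}-1$ meets every divisibility constraint and carries a multiplicative $IP$-structure, so the reservoir of admissible $y$ delivered by the central-sets machinery can be arranged to sit inside an $FP$-set of shifted primes.

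The main obstacle is the coupling step: the multiplicative structure (for $xy\in C$) and the additive structure (for $x+f(y)\in C$) must be realized with the \emph{same} shift $y$, and the good $y$'s must be produced \emph{before} $x$ is chosen. This is why no argument based on van der Waerden or density alone can work — for instance the odd numbers have density $1/2$ but contain no configuration $\{x,x+y,xy\}$ — and why one is forced through the algebra of $\beta\mathbb{N}$ (or through Moreira's original topological-dynamical framework) to obtain, all at once, a colour class $C$ together with an infinite, $FP(\mathbb{P}-1)$-structured family of shifts $y$ for which $\{x : xy\in C\}$ remains additively piecewise syndetic. Verifying that this family can be taken inside the sparse set $FP(\mathbb{P}-1)$ while still retaining enough additive largeness to drive the polynomial van der Waerden input is the delicate point.
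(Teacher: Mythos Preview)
Your plan has the order of operations backwards at the van der Waerden step. Given a piecewise syndetic set $A$, van der Waerden (polynomial or not) produces \emph{some} $a,d$ with $a+f_i(d)\in A$; it does not let you prescribe the difference $d=y$ in advance. So ``fix $y$, then feed $D_y\cap C$ into van der Waerden to get $x,x+y\in D_y\cap C$'' does not work: take $C$ the odd numbers and $y$ odd, so that $D_y\cap C$ is the odd numbers --- piecewise syndetic, yet no $x$ has both $x$ and $x+y$ odd. The real content is to identify \emph{which} $y$ are good, and this must happen before $x$ is chosen; you flag this as the ``main obstacle'' but supply no mechanism, and starting from multiplicative centrality does not provide one.

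The paper (which cites Theorem~\ref{Moreira's Theorem} from \cite{M17} and recovers it from the stronger Theorem~\ref{main theorem} by taking $F=\{0,x\}$) supplies that mechanism via Theorem~\ref{IPn pol van der ps}: for any additively piecewise syndetic $A$, the set of $n$ for which $\bigcap_{f\in F}(A-f(n))$ remains piecewise syndetic is $IP_N^\star$; since $\mathbb{P}-1$ is an additive $IP_0$-set by \cite{BLZ}, the shift can be chosen in $\mathbb{P}-1$. The multiplicative side is handled not through multiplicative centrality but by the elementary \cite[Lemma~2.5]{HS24}: dilating an additively piecewise syndetic set by a fixed integer keeps it piecewise syndetic. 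The argument then iterates --- translate, dilate, re-colour --- and a final pigeonhole on the colour indices $t_k$ extracts one $i$ and infinitely many $z_n\in FP(\mathbb{P}-1)$. No multiplicative idempotents enter, and Dirichlet's theorem is not the relevant input for the shifted primes: what is needed is the additive $IP_0$ property of $\mathbb{P}-1$, which is considerably deeper.
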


\begin{definition}\textbf{(Piecewise syndetic)}
   Let $\left(S,+\right)$ be a commutative semigroup and let $A\subseteq S$.  
 A is piecewise syndetic if and only if there exists $G\in\mathcal{P}_{f}\left(S\right)$
such that for every $F\in\mathcal{P}_{f}\left(S\right)$, there is
some $x\in S$ such that $F+x\subseteq\cup_{t\in G}(-t+A)$. Here $\mathcal{P}_{f}\left(S\right)$ is the  set of all finite subsets of $S$.
\end{definition}

In \cite[Corollary 1.11]{HS24}, N. Hindman and D. Strauss proved the following refinement of Moreira's Theorem:
\begin{theorem}\label{Moreira By HS}
     Let $r\in\mathbb{N}$ and let $\mathbb{N}=\bigcup_{i=1}^{r}C_{i}$. There exist $i\in\left\{1,2,\ldots,r\right\}$ and infinitely many $y$ such that $\left\{x\in\mathbb{N}:\left\{x,xy,x+y\right\}\subseteq C_{i}\right\}$ is piecewise syndetic.
\end{theorem}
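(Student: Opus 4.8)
\medskip
\noindent\textbf{Proof strategy for Theorem~\ref{Moreira By HS}.}
The plan is to work in the compact right-topological semigroups $(\beta\mathbb{N},+)$ and $(\beta\mathbb{N},\cdot)$ and to reconstruct the proof of Moreira's Theorem while tracking membership in a minimal ideal. Recall that $A\subseteq\mathbb{N}$ is piecewise syndetic if and only if $\overline{A}$ meets the smallest two-sided ideal $K(\beta\mathbb{N},+)$, equivalently $A\in q$ for some $q\in K(\beta\mathbb{N},+)$; in particular every member of every minimal idempotent of $(\beta\mathbb{N},+)$ is piecewise syndetic. Writing $y^{-1}C=\{x\in\mathbb{N}:xy\in C\}$ and $-y+C=\{x\in\mathbb{N}:x+y\in C\}$, the set in the conclusion equals $C_i\cap y^{-1}C_i\cap(-y+C_i)$. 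I would reduce the theorem to the following conditional statement, which decouples the colouring from the dilation: \emph{if $C\subseteq\mathbb{N}$ is multiplicatively central, then for infinitely many $y$ the set $C\cap y^{-1}C\cap(-y+C)$ is piecewise syndetic.} This suffices, because some colour class is multiplicatively central --- just take the class that belongs to a fixed minimal idempotent of $(\beta\mathbb{N},\cdot)$.

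\emph{The easy half: the dilation.} Let $C$ be multiplicatively central and fix a minimal idempotent $u=u\cdot u$ of $(\beta\mathbb{N},\cdot)$ with $C\in u$; as $u$ is nonprincipal, every member of $u$ is infinite. From $u\cdot u=u$ one gets $B:=\{y\in\mathbb{N}:y^{-1}C\in u\}\in u$, and for every $y\in B$ we have $C\cap y^{-1}C\in u$. Thus, over the infinite set of multipliers $B$, the requirement ``$xy\in C$'' has been absorbed into a multiplicatively central constraint on $x$, and what remains is to show that for infinitely many $y\in B$ the set $\bigl(C\cap y^{-1}C\bigr)\cap(-y+C)$ is additively piecewise syndetic.

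\emph{The hard half: the translation.} This last point is the heart of the matter and cannot be deduced formally from Theorem~\ref{Moreira's Theorem}: for a fixed ``bad'' $y$ the set $C\cap(-y+C)$ may be empty (modular obstructions), so it is essential both that $y$ be allowed to range over the multiplicatively structured set $B$ and that one re-examine the proof of Moreira's Theorem rather than cite it. That proof produces the configuration $\{x,x+y,xy\}$ by a Birkhoff-type multiple recurrence in a two-dimensional topological dynamical system attached to the set $C$, with $x$ running over the orbit and $y$ supplied by the multiplicative structure; I would strengthen its conclusion by performing the recurrence inside a syndetic set of return times --- equivalently, by carrying out the construction relative to a minimal idempotent of $(\beta\mathbb{N},+)$ --- so that the set of valid $x$ is forced to meet $K(\beta\mathbb{N},+)$. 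Checking that the recurrence is compatible with drawing the multipliers from $B$ (so that $xy\in C$ can be kept simultaneously) and that the quantitative output indeed lands in the minimal ideal then yields the conditional statement, and hence the theorem with $i$ the index of the multiplicatively central colour class.

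The main obstacle is the hard half: the additive translation and the multiplicative dilation by the same $y$ are not simultaneously compatible with any one semigroup structure on $\beta\mathbb{N}$, so all of the difficulty of Moreira's Theorem is concentrated there, and the new ingredient --- making the recurrence quantitative enough that its conclusion lies in the minimal ideal --- is exactly what upgrades Theorem~\ref{Moreira's Theorem} to Theorem~\ref{Moreira By HS}. The same scheme should give the polynomial, shifted-prime strengthening announced in the abstract: in the easy half one chooses $u$ to contain a finite-products subsystem of $\mathbb{P}-1$ lying in a single colour class (by Hindman's Theorem applied to $\mathbb{P}-1$), so that the multipliers can be required to lie in $FP(\mathbb{P}-1)$; and in the hard half Moreira's single linear translation $x\mapsto x+y$ is replaced by the finitely many polynomial translations $x\mapsto x+f(y)$, $f\in F\subseteq x\mathbb{Z}[x]$, invoking a multiple-recurrence form of the polynomial van der Waerden Theorem in place of the linear one.
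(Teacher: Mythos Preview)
The paper does not prove Theorem~\ref{Moreira By HS} directly---it is quoted from \cite{HS24}---but it does prove the polynomial, shifted-prime strengthening (Theorem~\ref{main theorem} and Theorem~\ref{shifted prime aboundance moreira}), of which Theorem~\ref{Moreira By HS} is the special case $F=\{0,x\}$. That proof is very different from your plan: it is a purely combinatorial iterated construction in the style of \cite{HS24}, with no $\beta\mathbb{N}$, no idempotents, and no dynamics. One repeatedly applies the piecewise-syndetic polynomial van der Waerden statement (Theorem~\ref{IPn pol van der ps}) to produce a nested sequence $B_0\supseteq$-type data $B_j,D_j,y_j$, using at each step only that (i) for piecewise syndetic $A$ the set of good $n$ is $IP_N^\star$ (hence meets $\mathbb{P}-1$), and (ii) $nA$ is again piecewise syndetic. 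A pigeonhole on the colour indices $t_j$ then yields a single colour $C_i$ and the products $z_n=y_{k(n)}\cdots y_{k(n-1)+1}\in FP(\mathbb{P}-1)$. Nothing about multiplicative centrality or Moreira's topological recurrence is needed.

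Your proposal, by contrast, has a genuine gap at exactly the point you identify as ``the heart of the matter''. The ``easy half'' is fine, but the ``hard half'' is only a programme: you say you would redo Moreira's recurrence ``relative to a minimal idempotent of $(\beta\mathbb{N},+)$'' so that the set of valid $x$ meets $K(\beta\mathbb{N},+)$, yet you give no mechanism for why the additive recurrence can be made to land in the minimal ideal while the multiplier $y$ is simultaneously constrained to the multiplicatively central set $B$. Moreira's original argument does not supply this, and it is precisely this coupling that the iterated construction in the paper handles, by alternating the additive step (shrink to a piecewise syndetic $D_{k+1}$) with the multiplicative step (dilate by $y_{k+1}$ and re-colour). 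As written, your sketch would need an independent lemma of the form ``if $C$ is multiplicatively central then $C\cap(-y+C)$ is additively piecewise syndetic for infinitely many $y\in B$'', and you have not indicated how to obtain it.

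A smaller point: in your final paragraph the route to $y\in FP(\mathbb{P}-1)$ is not via ``Hindman's Theorem applied to $\mathbb{P}-1$''. The paper instead uses that $\mathbb{P}-1$ is an $IP_0$ set and that the set of good $n$ in Theorem~\ref{IPn pol van der ps} is $IP_N^\star$; this is what forces each $y_j$ into $\mathbb{P}-1$ and hence each $z_n$ into $FP(\mathbb{P}-1)$.
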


J. Moreira proved his Theorem in more generalized setting , one of its particular is the following:

\begin{theorem}\label{Moreira,s polynomial}\cite[Corollary 6.1]{M17}
    Let $k\in\mathbb{N}$ and $f_{1},f_{2},\ldots,f_{k}\in\mathbb{Z}\left[x\right]$ satisfy $f_{l}\left(0\right)=0$ for each $l$. Then for any finite coloring of $\mathbb{N}$, there exists $x,y\in\mathbb{N}$ such that $\left\{xy,x+f_{1}\left(y\right),\ldots,x+f_{k}\left(y\right)\right\}$ is monochromatic.
\end{theorem}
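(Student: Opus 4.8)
The plan is to work in the Stone--Čech compactification $\beta\mathbb{N}$ and to exploit the interaction of its additive and multiplicative semigroup structures, upgrading the argument behind Theorem~\ref{Moreira's Theorem} by replacing van der Waerden's theorem with its polynomial version (hence the keyword ``Polynomial van der Waerden's Theorem''). Write the given coloring as $\mathbb{N}=\bigsqcup_{i=1}^{r}C_{i}$. The first step is to produce a single color class that is rich with respect to both operations: using that $c\ell\,K(\beta\mathbb{N},+)$ is a left ideal of $(\beta\mathbb{N},\cdot)$, I would choose a multiplicative idempotent $p$ inside it, so that every member of $p$ is additively piecewise syndetic. Then the color class $C:=C_{i}$ with $C\in p$ is additively piecewise syndetic, contains a multiplicative $IP$-set $FP(\langle y_{m}\rangle_{m})$ by the Galvin--Glazer theorem, and its multiplicative core $C^{\star}:=\{y\in\mathbb{N}:y^{-1}C\in p\}$ again lies in $p$, with $y^{-1}C=\{x:xy\in C\}$ additively piecewise syndetic for every $y\in C^{\star}$; after passing to a subsequence we may assume $FP(\langle y_{m}\rangle)\subseteq C^{\star}\cap C$.

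Next I would reduce the target to a statement about this one color class. It suffices to find $y\in C^{\star}$ and $x\in y^{-1}C$ with $x+f_{l}(y)\in C$ for all $l\in\{1,\dots,k\}$: then $xy\in C$ and $x+f_{1}(y),\dots,x+f_{k}(y)\in C$, so $\{xy,x+f_{1}(y),\dots,x+f_{k}(y)\}\subseteq C_{i}$, as required. Thus the problem becomes: locate a polynomial van der Waerden configuration $\{x+f_{1}(y),\dots,x+f_{k}(y)\}$ inside the piecewise syndetic set $C$ with the \emph{same} $y$ that governs the dilation, i.e.\ with $y$ constrained to lie in the multiplicative $IP$-set $FP(\langle y_{m}\rangle)$.

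The crux --- and the step I expect to be the main obstacle --- is exactly this coupling of the two structures: the ``step'' $y$ of the polynomial progression must simultaneously belong to $FP(\langle y_{m}\rangle)$, so $y$ cannot be fixed in advance, and $x$ together with the finite index set defining $y=\prod_{m\in\alpha}y_{m}$ must be produced at once. I would handle this by running a polynomial van der Waerden / polynomial Hales--Jewett argument over the free commutative monoid on $\langle y_{m}\rangle$: given a large finite stock of generators, color each pair $(a,\alpha)$ --- $\alpha$ a finite subset of the index set, $a\in\mathbb{N}$ --- by $\big(\,\mathbf{1}[a\in y_{\alpha}^{-1}C]\,,\,\chi(a+f_{1}(y_{\alpha}))\,,\dots,\,\chi(a+f_{k}(y_{\alpha}))\,\big)$, where $y_{\alpha}:=\prod_{m\in\alpha}y_{m}$, and apply the polynomial Hales--Jewett theorem of Bergelson--Leibman together with the standard fact that piecewise syndetic sets contain every finite configuration guaranteed by polynomial van der Waerden (a consequence of the syndeticity of the return-time set in the polynomial Szemerédi theorem). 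This should yield a combinatorial line producing $a$ and a nonempty $\alpha$ with $y_{\alpha}\in C^{\star}\cap C$, $a\in y_{\alpha}^{-1}C$ and $a+f_{l}(y_{\alpha})\in C$ for all $l$; setting $x=a$, $y=y_{\alpha}$ finishes, and the same scheme with ordinary van der Waerden recovers Theorem~\ref{Moreira's Theorem}. The delicate point --- where Moreira's original argument (which one could alternatively invoke directly, or reproduce via polynomial Szemerédi inside a minimal dynamical system) invests essentially all of its effort --- is to arrange the coloring and the choice of $p$ so that the piecewise syndeticity of $C$ and of each $y_{\alpha}^{-1}C$ is preserved and so that the step of the line genuinely lands in the prescribed multiplicative $IP$-set rather than in some large-but-useless set disjoint from it.
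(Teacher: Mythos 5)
There is a genuine gap, and it sits exactly where you flag ``the crux''. (For calibration: the paper does not prove this statement at all --- it is quoted from Moreira [M17, Corollary 6.1]; the closest thing to a proof in the paper is the strictly stronger Theorem \ref{main theorem}, whose iterative Hindman--Strauss-style construction would in particular yield this corollary.) Your reduction asks for $y$ in the multiplicative IP-set with $x\in y^{-1}C\cap\bigcap_{l=1}^{k}\bigl(C-f_{l}(y)\bigr)$. Choosing a multiplicative idempotent $p\in c\ell\,K(\beta\mathbb{N},+)$ does give you that $C$, $C^{\star}$ and each $y^{-1}C$ (for $y\in C^{\star}$) are additively piecewise syndetic, and the polynomial van der Waerden/Goswami-type input (Theorem \ref{IPn pol van der ps}) gives many $y$ for which $\bigcap_{l}\bigl(C-f_{l}(y)\bigr)$ is piecewise syndetic. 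But two piecewise syndetic sets need not intersect, and nothing in your setup puts $\bigcap_{l}\bigl(C-f_{l}(y)\bigr)$ into the same ultrafilter $p$ that contains $y^{-1}C$; membership of $y^{-1}C$ in $p$ is useless unless the additive-shift set is also $p$-large, which is not established and is precisely the obstruction that made Moreira's theorem hard. The proposed fix --- coloring pairs $(a,\alpha)$ by $\bigl(\mathbf{1}[a\in y_{\alpha}^{-1}C],\chi(a+f_{1}(y_{\alpha})),\ldots\bigr)$ and invoking polynomial Hales--Jewett --- does not repair this: a monochromatic line only makes these coordinates \emph{constant}, it does not force the indicator to equal $1$ nor the color to be that of $C$, so it never produces actual membership $a\in y_{\alpha}^{-1}C$ and $a+f_{l}(y_{\alpha})\in C$. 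Likewise the ``standard fact'' that piecewise syndetic sets contain all polynomial van der Waerden configurations gives no control tying the step $y$ to the multiplicative data $xy\in C$.

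The known resolutions (Moreira's original argument, and the Hindman--Strauss scheme the paper adapts in Theorem \ref{main theorem}) do not attempt a single application of PHJ or a single well-chosen idempotent. Instead they iterate: starting from a piecewise syndetic color class $B_{0}$, one uses the largeness statement (here Theorem \ref{IPn pol van der ps} via Theorem \ref{shift of prime van der}) to pick $y_{1}$ with $D_{1}=\bigcap_{f}\bigl(B_{0}-f(y_{1})\bigr)$ piecewise syndetic, uses that $y_{1}D_{1}$ is again piecewise syndetic, intersects with the color classes to get a piecewise syndetic $B_{1}\subseteq C_{t_{1}}\cap y_{1}D_{1}$, repeats to build nested $B_{k}$, $y_{k}$, and finally pigeonholes on the colors $t_{k}$; the telescoping inclusions $B_{m}\subseteq y_{m}\cdots y_{j+1}B_{j}$ are what simultaneously deliver $xy\in C_{i}$ and $x+f_{l}(y)\in C_{i}$ with $y$ a product of the chosen $y_{m}$'s. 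Your write-up contains none of this mechanism, and explicitly defers the coupling step (``This should yield\ldots''), so as it stands the argument does not prove the theorem. If you want a complete proof along largeness lines, reproduce that induction (or cite Moreira directly); the idempotent-plus-PHJ shortcut you describe is not known to work and, at the intersection step above, demonstrably breaks.
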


Naturally,  a question arises in our minds as to whether we can refine Theorem \ref{Moreira,s polynomial} analog of the Theorem \ref{Moreira By HS} by N. Hindman and D. Strauss. In \textbf{ Section 2}, we will provide an affirmative answer to this question. We also prove something more associated with the set of prime numbers, which is reflected in the title of this article.

Let $\mathbb{P}$ be the set of prime numbers and $\mathbb{P}-1=\left\{p-1:p\in\mathbb{P}\right\}$ and similarly $\mathbb{P}+1=\left\{p+1:p\in\mathbb{P}\right\}$. We state the following theorem from \cite{BLZ} by V. Bergelson, A. Leibman and T. Ziegler, which motivated us for this article. 

\begin{theorem}\label{Prime shift PVW}\cite[Theorem 5]{BLZ}
    For any partition $\mathbb{Z}^{d}=\bigcup_{s=1}^{c}C_{s}$ at least one of the sets $C_{s}$ has the property that for any finite set of polynomials $\vec{f}_{i}:\mathbb{Z}\rightarrow \mathbb{Z}^{d}$, $i=1,\ldots,k$, with $\vec{f}_{i}\left(0\right)=0$ for all $i$, $$\left\{n\in\mathbb{N}:\vec{a},\vec{a}+\vec{f}_{1}\left(n\right),\ldots, \vec{a}+\vec{f}_{k}\left(n\right)\in C_{s}\text{ for some } \vec{a}\in\mathbb{Z}^{d} \right\}$$ ha nonempty intersection with $\mathbb{P}-1$ and $\mathbb{P}+1$.
\end{theorem}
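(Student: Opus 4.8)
The presence of the primes rules out a purely combinatorial or ergodic argument, so the plan is to peel off the soft part first and then invoke the transference machinery that makes the shifted primes behave, in this context, like a set of positive density. First I would reduce the partition statement to a single density statement. Upper Banach density $d^{*}$ on $\mathbb{Z}^{d}$ is subadditive and $d^{*}(\mathbb{Z}^{d})=1$, so some cell $C:=C_{s}$ of the partition satisfies $d^{*}(C)\ge 1/c>0$. I claim this \emph{fixed} $C$ already works for every finite polynomial family at once, which is exactly the form of the theorem; so it suffices to show that whenever $A\subseteq\mathbb{Z}^{d}$ has $d^{*}(A)>0$ and $\vec f_{1},\dots,\vec f_{k}:\mathbb{Z}\to\mathbb{Z}^{d}$ are polynomial maps with $\vec f_{i}(0)=0$, there are infinitely many primes $p$ and points $\vec a\in\mathbb{Z}^{d}$ with $\vec a,\vec a+\vec f_{1}(p-1),\dots,\vec a+\vec f_{k}(p-1)\in A$, and likewise with $p-1$ replaced by $p+1$. (Throwing in the zero polynomial forces $\vec a\in A$ as well.)

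Next I would invoke the multidimensional Furstenberg correspondence principle to pass to a probability measure-preserving system $(X,\mathcal{B},\mu,T_{1},\dots,T_{d})$ with commuting transformations and a set $\tilde A$ with $\mu(\tilde A)\ge d^{*}(A)>0$ such that, writing $T^{\vec v}=T_{1}^{v_{1}}\cdots T_{d}^{v_{d}}$,
\[
d^{*}\!\Big(A\cap\bigcap_{i=1}^{k}\big(A-\vec f_{i}(n)\big)\Big)\ \ge\ \mu\!\Big(\tilde A\cap\bigcap_{i=1}^{k}T^{-\vec f_{i}(n)}\tilde A\Big)\ =:\ c(n)
\]
for all $n$. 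It therefore suffices to prove that $c(p\mp1)>0$ for infinitely many primes $p$; I would in fact aim to keep the average of $n\mapsto c(n)$ over $n\in(\mathbb{P}-1)\cap[1,N]$, respectively $(\mathbb{P}+1)\cap[1,N]$, bounded below by a positive constant as $N\to\infty$.

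The heart of the argument is this last averaging step. I would run the $W$-trick: fix $w$, put $W=\prod_{q\le w}q$, restrict the primes to a single residue class $b\pmod W$ with $\gcd(b,W)=1$, and replace the prime average by the average of $c$ against the normalized, $W$-adjusted von Mangoldt weight $\Lambda'$, which has mean $1+o_{w}(1)$. By the Green--Tao--Ziegler theorem, $\Lambda'-1$ has $U^{s}$-Gowers norm $o_{w}(1)$ for every fixed $s$. On the other side, the polynomial multicorrelation sequence $n\mapsto c(n)$ is, by the characteristic-factor theory for polynomial multiple ergodic averages (Host--Kra, Leibman, Bergelson--Leibman, and, for the comparison with Gowers norms along primes, Frantzikinakis--Host--Kra and Wooley--Ziegler), approximable on average and up to arbitrarily small $L^{2}$ error by a nilsequence of bounded step, and bounded-step nilsequences are asymptotically orthogonal to $\Lambda'-1$. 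Hence, as $w\to\infty$, the $\Lambda'$-weighted average of $c$ converges to the unweighted average $\limsup_{N}\frac{1}{N}\sum_{n=1}^{N}c(n)$, which is strictly positive by the multidimensional polynomial Szemer\'edi theorem of Bergelson and Leibman applied to $\tilde A$ and $\vec f_{1},\dots,\vec f_{k}$ (here $\vec f_{i}(0)=0$ is precisely what makes the configuration genuine). Undoing the $W$-trick and the correspondence gives the conclusion for $\mathbb{P}-1$; the argument for $\mathbb{P}+1$ is identical, since the primes equidistribute in the residue classes mod $W$ regardless of the shift by $\pm1$.

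\textbf{Main obstacle.} Steps one and two, and the appeal to the unrestricted polynomial Szemer\'edi theorem, are routine. The real work is the transference in the third step: one must certify that the polynomial multicorrelation sequences $c(n)$ are controlled by uniform limits of bounded-step nilsequences strongly enough for the Gowers-uniformity of the primes to kill the error term. In the one-dimensional linear case this is the Frantzikinakis--Host--Kra / Wooley--Ziegler argument; in the multidimensional polynomial generality it requires the full Green--Tao--Ziegler inverse theorem for all Gowers norms together with a PET-induction-compatible characteristic-factor analysis, and, crucially, a uniform bound on the nilpotency step (hence on which Gowers norm one must defeat) across the whole family $\{\vec f_{i}\}$, whose members may have different degrees. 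Arranging the $W$-trick so that a positive-density set of residue classes $b\pmod W$ supplies the main term, uniformly in the polynomial data, is the remaining bookkeeping.
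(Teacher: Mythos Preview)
The paper does not give a proof of this theorem at all: it is quoted verbatim from \cite[Theorem~5]{BLZ} as background and motivation, and is never argued for in the text. There is therefore no ``paper's own proof'' to compare your proposal against.

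That said, what you have written is a faithful outline of the argument actually carried out in \cite{BLZ} (with the averaging-along-primes machinery supplied by \cite{FHK} in the linear case and \cite{WZ} in the polynomial case). The reduction from the partition statement to a single cell of positive upper Banach density, the passage via the Furstenberg correspondence principle to a polynomial multiple-recurrence statement, the $W$-trick, the comparison of the $\Lambda'$-weighted average with the uniform average through nilsequence approximation of the multicorrelation sequence and Gowers-uniformity of the von Mangoldt function (Green--Tao--Ziegler), and the final appeal to the Bergelson--Leibman polynomial Szemer\'edi theorem for the positivity of the main term --- these are exactly the steps in the original source. Your identification of the main obstacle (controlling the polynomial multicorrelation sequence by bounded-step nilsequences so that the error against $\Lambda'-1$ vanishes) is precisely the content of \cite{WZ}. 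So your proposal is correct and is essentially the proof in \cite{BLZ}; it is simply not a proof that appears in the present paper.
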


For $d=1$, in  \textbf{Section 2}, we will prove a refinement of the Theorem \ref{Prime shift PVW}, which is the following:

\begin{theorem}\label{shift of prime van der}
     Let $A$ be a piecewise syndetic in $\left(\mathbb{N},+\right)$  and $F\in\mathcal{P}_{f}\left(x\mathbb{Z}[x]\right)$, then $$\left\{n\in\mathbb{N}:\bigcap_{f\in F}\left(-f\left(n\right)+A\right)\neq\emptyset\text{ is piecewise syndetic in } \left(\mathbb{N},+\right)\right\}$$ has infinite intersection with $\mathbb{P}-1$ and $\mathbb{P}+1$. 
\end{theorem}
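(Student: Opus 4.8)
The plan is to reformulate everything in the Stone--\v{C}ech compactification $\beta\mathbb{N}$ and then split the work into a ``polynomial'' half, handled by the polynomial van der Waerden theorem, and a ``prime'' half, handled by configurations of the shape $\mathrm{FS}(\langle x_1,\ldots,x_r\rangle)$ sitting inside $\mathbb{P}-1$ and $\mathbb{P}+1$. Write
\[
R=\Bigl\{n\in\mathbb{N}:\ \bigcap_{f\in F}(-f(n)+A)\ \text{is piecewise syndetic}\Bigr\}.
\]
Since $A$ is piecewise syndetic, $\overline{A}\cap K(\beta\mathbb{N})\neq\emptyset$; unwinding the definitions, $n\in R$ precisely when there is $q\in K(\beta\mathbb{N})$ with $f(n)+q\in\overline{A}$ for every $f\in F$, equivalently $\bigcap_{f\in F}(-f(n)+A)\in q$. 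It therefore suffices to prove that for every $m\in\mathbb{N}$ there is $n\in R$ with $n>m$ and $n\in\mathbb{P}-1$, and likewise with $\mathbb{P}+1$ in place of $\mathbb{P}-1$.

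The first ingredient is a refinement of the polynomial van der Waerden theorem (Theorem~\ref{Moreira,s polynomial}) in which ``nonempty intersection'' is upgraded to ``piecewise syndetic intersection'' along a large set of $n$: if $D\subseteq\mathbb{N}$ is piecewise syndetic and $F\in\mathcal{P}_f(x\mathbb{Z}[x])$, then $\{n:\bigcap_{f\in F}(-f(n)+D)\text{ is piecewise syndetic}\}$ is an $IP_{r_0}^*$-set for a suitable $r_0\in\mathbb{N}$; that is, it meets $\mathrm{FS}(\langle x_1,\ldots,x_{r_0}\rangle)$ for every choice of $x_1,\ldots,x_{r_0}$. I would isolate this as a preliminary lemma; it follows from running the polynomial van der Waerden argument inside $K(\beta\mathbb{N})$ (equivalently, from combining the finitary $IP$-form of the polynomial Szemer\'edi phenomenon with the partition regularity of piecewise syndeticity). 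Applying it with $D=A$ shows that $R$ is an $IP_{r_0}^*$-set.

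The second ingredient is that for every $r$ and every $M$ both $\mathbb{P}-1$ and $\mathbb{P}+1$ contain an $IP_r$-set $\mathrm{FS}(\langle x_1,\ldots,x_r\rangle)$ with $\min_i x_i>M$: the $2^r-1$ affine-linear forms $1+\sum_{i\in I}x_i$ (resp.\ $-1+\sum_{i\in I}x_i$), $\emptyset\neq I\subseteq\{1,\ldots,r\}$, are pairwise non-proportional and have no local obstruction once all the $x_i$ are taken divisible by a large factorial, so such prime configurations exist in abundance; this is the kind of statement that underlies Theorem~\ref{Prime shift PVW}. Combining the two ingredients: given $m$, apply the prime statement with $r=r_0$ and $M=m$ to obtain $\mathrm{FS}(\langle x_1,\ldots,x_{r_0}\rangle)\subseteq\mathbb{P}-1$ with all $x_i>m$; since $R$ is $IP_{r_0}^*$ it meets this set, giving $n\in R\cap(\mathbb{P}-1)$ with $n>m$. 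The same argument with $\mathbb{P}+1$ finishes the proof, and as $m$ was arbitrary both intersections are infinite.

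The main obstacle is the lemma of the second paragraph, i.e.\ forcing the intersection $\bigcap_{f\in F}(-f(n)+A)$ to be genuinely piecewise syndetic, not merely nonempty, for an $IP_{r_0}^*$-large set of $n$. A naive ``thickening'' does not suffice: if $B=\bigcup_{t\in G}(A-t)$ is thick then $\bigcap_{f\in F}(-f(n)+B)$ is thick for \emph{every} $n$, but this fact does not descend to $A$, because intersections do not commute with the finite unions that define piecewise syndeticity; one genuinely needs the polynomial van der Waerden machinery to locate the $n$ at which the syndetic structure of $A$ lines up simultaneously across all the shifts $f(n)$, $f\in F$. A secondary point requiring care is the precise quantitative input on $IP_r$-sets in shifted primes and the verification that the relevant prime configurations carry no small-modulus obstruction.
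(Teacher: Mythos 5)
Your proposal is correct and takes essentially the same route as the paper: your first ingredient is precisely the cited refinement of Goswami (Theorem \ref{IPn pol van der ps}, whose finitary $IP_{r_0}^{*}$ form -- which you rightly insist on, since mere $IP^{*}$ would not reach $\mathbb{P}\pm1$ -- comes from the polynomial Hales--Jewett theorem rather than plain polynomial van der Waerden), and your second ingredient is exactly the Bergelson--Leibman--Ziegler fact that $\mathbb{P}-1$ and $\mathbb{P}+1$ are $IP_{0}$-sets with generators that may be taken arbitrarily large, combined in the same way. The only difference is that the paper cites these two inputs outright, whereas you sketch re-derivations of them.
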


Let $FP\left(\mathbb{P}-1\right)$ be the set of all finite products of distinct elements of  $\mathbb{P}-1$. So $$FP\left(\mathbb{P}-1\right)=\left\{\prod _{x\in H}x: H\in \mathcal{P}_{f}\left(\mathbb{P}-1\right) \right\}.$$  Similarly let $FP\left(\mathbb{P}+1\right)$ be the set of all finite products of distinct elements of  $\mathbb{P}+1$. For $A\subseteq\mathbb{N}$ and $n\in\mathbb{N}$, we define $A/n=\left\{m:mn\in A\right\}$ and $-n+A=\left\{m:m+n\in A\right\}$. In \textbf{Section 2}, we will prove the following:

\begin{theorem}
     Let $r\in\mathbb{N}$ and $\mathbb{N}=\bigcup_{i=}^{r}C_{i}$ and $F\in\mathcal{P}_{f}\left(x\mathbb{Z}[x]\right)$ Then there exists $i\in\left\{1,2,\ldots,r\right\}$  such that  $$\left\{n\in\mathbb{N}:C_{i}/n\cap\bigcap_{f\in F}\left(-f\left(n\right)+C_{i}\right)\neq\emptyset\text{ is piecewise syndetic in } \left(\mathbb{N},+\right)\right\}$$ has infinite intersection with $FP\left(\mathbb{P}-1\right)$. 
\end{theorem}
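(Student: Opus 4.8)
The plan is to combine a multiplicative Galvin--Glazer construction inside $FP\left(\mathbb{P}-1\right)$ with the prime-shift polynomial van der Waerden input of Theorem~\ref{shift of prime van der} and the polynomial refinement of Moreira's Theorem obtained earlier in this section. Since any infinite sequence of distinct elements of $\mathbb{P}-1$ has all of its finite products inside $FP\left(\mathbb{P}-1\right)$, first fix such a sequence $\left(a_{n}\right)$ and form the compact subsemigroup $T=\bigcap_{m}\overline{FP\left(\left(a_{n}\right)_{n\geq m}\right)}$ of $\left(\beta\mathbb{N},\cdot\right)$; a multiplicative idempotent $p\in T$ then has the property that every member of $p$ meets $FP\left(\mathbb{P}-1\right)$ infinitely. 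Fix the color $i$ with $C_{i}\in p$. Using $p=p\cdot p$ and the standard iteration inside an idempotent, extract a sequence of blocks whose finite products $y$ all lie in $FP\left(\mathbb{P}-1\right)$ and satisfy $C_{i}/y\in p$; these are the candidate $y$'s. Dirichlet's theorem on primes in arithmetic progressions is available along the way to force whatever divisibility conditions on the $a_{n}$ turn out to be convenient.

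It remains to certify that for infinitely many of these $y$ the set $C_{i}/y\cap\bigcap_{f\in F}\left(-f\left(y\right)+C_{i}\right)$ is piecewise syndetic in $\left(\mathbb{N},+\right)$. The key algebraic point is a composition identity: writing $y=m\cdot b$ with $m$ a product of previously chosen blocks and $b$ the next block, one has $f\left(y\right)=q_{f}\left(b\right)$ where $q_{f}\left(t\right):=f\left(mt\right)$ again belongs to $t\mathbb{Z}\left[t\right]$, while $C_{i}/y=\left(C_{i}/m\right)/b$. So one inducts on the number of blocks of $y$, carrying the hypothesis that the configuration attached to the current product is piecewise syndetic; at the inductive step one feeds this piecewise syndetic set into the prime-shift version of the Polynomial van der Waerden / Moreira machinery to obtain infinitely many shifted primes $b$ for which the enlarged configuration is again piecewise syndetic, and intersects the admissible $b$'s with the purely multiplicative condition that $m\cdot b$ remains part of the idempotent-$p$ bookkeeping. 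That multiplicative condition defines a member of $p$, hence has infinitely many solutions in $FP\left(\mathbb{P}-1\right)$, while the prime-shift theorem supplies infinitely many solutions to the additive one; choosing $b$ in the intersection and iterating yields, for the single color $i$, infinitely many $y\in FP\left(\mathbb{P}-1\right)$ with the required piecewise syndeticity.

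The step I expect to be the main obstacle is exactly the interface, at the inductive step, between the multiplicative and additive notions of largeness. A multiplicative idempotent supported on $FP\left(\mathbb{P}-1\right)$ has only additively thin members, so piecewise syndeticity of $C_{i}/y$ cannot be read off from $C_{i}/y\in p$; it must instead be produced by applying the Polynomial van der Waerden / Moreira input to the previous stage's (piecewise syndetic) configuration, and the delicate task is to set up the inductive hypothesis strongly enough that it survives simultaneously the division by the new block $b$ and the polynomial shift by $q_{f}\left(b\right)$, all while $m\cdot b$ stays inside $FP\left(\mathbb{P}-1\right)$ and the color $i$ stays fixed. In effect, Theorem~\ref{shift of prime van der} together with the Galvin--Glazer machinery is precisely what upgrades the set of ``good $y$'' in the polynomial refinement of Moreira's Theorem from a single shifted prime to an arbitrary finite product of shifted primes.
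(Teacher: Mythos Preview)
Your plan contains the right polynomial composition identity $q_{f}(t)=f(mt)\in t\mathbb{Z}[t]$ and the right inductive engine (Theorem~\ref{shift of prime van der}), but the multiplicative Galvin--Glazer layer is both unnecessary and the source of the obstacle you yourself flag. The paper proves the statement (via the equivalent Theorem~\ref{shifted prime aboundance moreira}, which follows from Theorem~\ref{main theorem}) by an entirely elementary induction with \emph{no} use of $\beta\mathbb{N}$ or idempotents: at step $k$ one has a piecewise syndetic set $B_{k}\subseteq C_{t_{k}}$; Theorem~\ref{shift of prime van der} supplies $y_{k+1}\in\mathbb{P}-1$ for which the intersection of the relevant polynomial translates of $B_{k}$ is piecewise syndetic; multiplication by $y_{k+1}$ preserves piecewise syndeticity by \cite[Lemma~2.5]{HS24}; and then one \emph{chooses a fresh color} $t_{k+1}$ so that the intersection with $C_{t_{k+1}}$ is again piecewise syndetic. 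Only at the end does pigeonhole select a single $i$ with $t_{k}=i$ for infinitely many $k$, and consecutive such $k$'s give the required $z_{n}=\prod y_{j}\in FP(\mathbb{P}-1)$.

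The gap in your approach is exactly that fixing $i$ in advance via $C_{i}\in p$ forces you, at each step, to demand that $y_{k+1}D_{k+1}\cap C_{i}$ be piecewise syndetic, and nothing guarantees this: partition-regularity of piecewise syndeticity only tells you that \emph{some} color works. Furthermore, the two conditions you propose to intersect live in incompatible worlds: the multiplicative bookkeeping condition is a member of the multiplicative idempotent $p$, while Theorem~\ref{shift of prime van der} (through Theorem~\ref{IPn pol van der ps}) outputs an additive $IP_{N}^{\star}$-set meeting $\mathbb{P}-1$; there is no mechanism forcing their intersection. Dropping the idempotent and allowing the color to vary along the induction, as the paper does, dissolves the difficulty: membership in $FP(\mathbb{P}-1)$ is automatic since each block $y_{j}$ is a shifted prime, and the single color $i$ is recovered at the end by pigeonhole rather than imposed at the start.
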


An analog theorem is also true for $FP\left(\mathbb{P}+1\right)$ and the above theorem is equivalent to the following:

\begin{theorem}\label{shifted prime aboundance moreira}
   Let  $r\in\mathbb{N}$, and let $\mathbb{N}=\bigcup_{i=1}^{r}C_{i}$. There exist $i\in \left\{1,2,\ldots,r\right\}$ and infinitely many $y$ in $FS\left(\mathbb{P}-1\right)$  such that $$\left\{x\in\mathbb{N}:\left\{xy,x+f(y):f\in F\right\}\subseteq C_{i}\right\}$$ is piecewise syndetic, where $F$ is a finite subset of $x\mathbb{Z}\left[x\right]$.
\end{theorem}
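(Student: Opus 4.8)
\emph{Reformulation.} Since $xy\in C_{i}\iff x\in C_{i}/y$ and $x+f(y)\in C_{i}\iff x\in-f(y)+C_{i}$, the target set is
\[
D_{i}(y):=\bigl(C_{i}/y\bigr)\cap\bigcap_{f\in F}\bigl(-f(y)+C_{i}\bigr),
\]
and the claim is that for one colour $i$ there are infinitely many $y\in FS(\mathbb{P}-1)$ with $D_{i}(y)$ piecewise syndetic; this matches the divisor-form of the preceding theorem (with $n=y$). The first, purely bookkeeping, observation is that every element of $\mathbb{P}-1$ is a one-term finite sum, so $\mathbb{P}-1\subseteq FS(\mathbb{P}-1)$. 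Hence it suffices to produce infinitely many $y$ lying already in $\mathbb{P}-1$ for which $D_{i}(y)$ is piecewise syndetic; the ``finite sums'' membership demanded by the statement is then automatic. This is the step that the additive version makes cheap, in contrast to the product version, where forcing $y$ into $FP(\mathbb{P}-1)$ genuinely requires multiplicative idempotent machinery.

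\emph{The additive recurrence part.} Piecewise syndeticity is partition regular, so some cell $C_{i}$ is piecewise syndetic; fix such an $i$. Applying Theorem \ref{shift of prime van der} with $A=C_{i}$ and the given $F\in\mathcal{P}_{f}(x\mathbb{Z}[x])$ yields infinitely many $y\in\mathbb{P}-1$ for which $R(y):=\bigcap_{f\in F}(-f(y)+C_{i})$ is piecewise syndetic. This handles all the additive constraints $x+f(y)\in C_{i}$ simultaneously, and places the corresponding $y$ inside $\mathbb{P}-1\subseteq FS(\mathbb{P}-1)$, which is exactly the abundance we need on the ``finite sums'' side.

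\emph{The multiplicative term and the intersection --- the crux.} What remains is to intersect $R(y)$ with the divisor set $C_{i}/y$ while retaining piecewise syndeticity, i.e.\ to realise $xy\in C_{i}$ together with the $x+f(y)\in C_{i}$. Because piecewise syndeticity is not preserved under intersection, I cannot simply intersect two independently produced piecewise syndetic sets. The plan is to force $R(y)$ and $C_{i}/y$ to share a common ultrafilter in the smallest ideal $K(\beta\mathbb{N},+)$, so that the intersection is again central and hence piecewise syndetic. Concretely, I would select $i$ not merely as a piecewise syndetic cell but as a member of a minimal idempotent $q$ of $(\beta\mathbb{N},\cdot)$; then $C_{i}^{\star}=\{y\in C_{i}:C_{i}/y\in q\}\in q$, so for every $y\in C_{i}^{\star}$ the divisor set $C_{i}/y$ is a member of a multiplicative minimal idempotent and is therefore central and piecewise syndetic. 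The genuine difficulty is to run the polynomial recurrence of Theorem \ref{shift of prime van der} inside this same algebraic environment, so that $R(y)$ and $C_{i}/y$ belong to one minimal idempotent \emph{simultaneously} for infinitely many $y\in C_{i}^{\star}\cap(\mathbb{P}-1)$. This is precisely the reconciliation of additive and multiplicative largeness that lies at the heart of Moreira's theorem and its refinements; I expect it to be the main obstacle, and would resolve it by proving a strengthened form of Theorem \ref{shift of prime van der} in which the recurrence set is delivered as a member of a prescribed minimal idempotent compatible with $q$, rather than merely as a piecewise syndetic set.

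\emph{Conclusion and variants.} Combining the two parts gives, for the single colour $i$, infinitely many $y\in\mathbb{P}-1\subseteq FS(\mathbb{P}-1)$ with $D_{i}(y)$ piecewise syndetic, which is the assertion of Theorem \ref{shifted prime aboundance moreira}. The analogue with $\mathbb{P}+1$ follows by invoking the $\mathbb{P}+1$ conclusion of Theorem \ref{shift of prime van der} in the second step, and the product form (with $y\in FP(\mathbb{P}-1)$) follows from the same scheme once the observation $\mathbb{P}-1\subseteq FS(\mathbb{P}-1)$ is replaced by a multiplicative idempotent selection that lands $y$ in $FP(\mathbb{P}-1)$.
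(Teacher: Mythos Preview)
Your proposal has a genuine gap precisely where you label it ``the crux.'' You correctly note that piecewise syndeticity is not closed under intersection, so knowing that $R(y)=\bigcap_{f\in F}(-f(y)+C_i)$ is piecewise syndetic does not yield that $D_i(y)=(C_i/y)\cap R(y)$ is. Your proposed remedy---placing both sets in a common minimal idempotent via a hoped-for strengthening of Theorem~\ref{shift of prime van der}---is only a sketch: the strengthening is never formulated or proved, and there is no argument that $C_i/y$ is even piecewise syndetic for the colour $i$ you fixed at the outset as merely ``some piecewise syndetic cell.'' As written, the argument does not establish the conclusion. (A side remark: your shortcut $\mathbb{P}-1\subseteq FS(\mathbb{P}-1)$ exploits a typographical slip in the statement; the title, abstract, Theorem~\ref{main theorem}, and the paper's own proof all concern $FP(\mathbb{P}-1)$, finite \emph{products} of shifted primes.)

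The paper's proof takes an entirely different route, deriving the result in one line from Theorem~\ref{main theorem}. That theorem is established by an inductive construction in the style of Hindman--Strauss: at each stage $k$ one applies Theorem~\ref{shift of prime van der} to obtain $y_{k+1}\in\mathbb{P}-1$ and a piecewise syndetic $D_{k+1}$ encoding all the polynomial shifts, then multiplies by $y_{k+1}$ and only \emph{afterwards} selects a colour $t_{k+1}$ so that $B_{k+1}=y_{k+1}D_{k+1}\cap C_{t_{k+1}}$ is piecewise syndetic. The colour is not fixed in advance; a single $i$ with $t_k=i$ infinitely often is extracted by pigeonhole at the end. Because the multiplicative dilation and the polynomial shifts are interleaved at every step, the resulting sets $E_n=B_{k(n)}$ already sit inside $z_n\mathbb{N}\cap C_i$ with $z_n=y_{k(n)}\cdots y_{k(n-1)+1}\in FP(\mathbb{P}-1)$, and the desired $x$-set is simply $E_nz_n^{-1}$, piecewise syndetic by a direct lemma. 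No intersection of two independently produced large sets is ever required, which is exactly the obstacle your approach runs into.
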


An analog version of the above theorem is also true for $FP\left(\mathbb{P}+1\right)$.

\section{The shifted primes Moreira's Theorem}

We start this section, with some definitions. Let $\left(S,+\right)$ be a commutative semigroup, and $A\subseteq S$.
\begin{itemize}

		\item \textbf{($IP_{r}$-set)} Let $r\in \mathbb{N}$. The set $A$ is $IP_{r}$-set if and only if  there exists a sequence $\langle x_{n}\rangle _{n=1}^{r}$ in $S$ such that $FS\left(\langle x_{n}\rangle _{n=1}^{r}\right)\subseteq A$, where $ FS\left(\langle x_{n}\rangle _{n=1}^{r}\right)=\left\{ \sum_{n\in F}x_{n}:F\subseteq\{1,2,\ldots,r\}\right\}$.
		
		\item \textbf{($IP_{r}^{\star}$-set)} Let $r\in \mathbb{N}$. The set $A$ is called $IP_{r}^{\star}$-set, when it intersects with all $IP_{r}$-sets.
		
		\item \textbf{($IP_{0}$-set)} The set $A$ is $IP_{0}$-set if $A$ is $IP_{r}$-set for all $r\in\mathbb{N}$.
		
	\end{itemize}

\begin{theorem}
    For any partition $\mathbb{Z}^{d}=\bigcup_{s=1}^{c}C_{s}$ at least one of the sets $C_{s}$ has the property that for any finite set of polynomials $\vec{f}_{i}:\mathbb{Z}\rightarrow \mathbb{Z}^{d}$, $i=1,\ldots,k$, with $\vec{f}_{i}\left(0\right)=0$ for all $i$, $$\left\{n\in\mathbb{N}:\vec{a},\vec{a}+\vec{f}_{1}\left(n\right),\ldots, \vec{a}+\vec{f}_{k}\left(n\right)\in C_{s}\text{ for some } \vec{a}\in\mathbb{Z}^{d} \right\}$$  is an $IP_{r}^{\star}$-set for $r$ large enough.
\end{theorem}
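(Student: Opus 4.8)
The theorem we want to establish strengthens Theorem~\ref{Prime shift PVW} of Bergelson--Leibman--Ziegler by replacing "nonempty intersection with $\mathbb{P}-1$ and $\mathbb{P}+1$'' with the much stronger conclusion that the relevant set is $IP_r^\star$ for $r$ large enough. The natural strategy is a compactness/finitary reduction combined with the known qualitative result. First I would recall the finitary form of the polynomial van der Waerden theorem (the ingredient behind Theorem~\ref{Prime shift PVW}): for every $c$ and every finite family $\vec f_1,\dots,\vec f_k$ of polynomials vanishing at $0$, there is $N=N(c,k,\vec f_\bullet)$ such that any $c$-coloring of $\{1,\dots,N\}^d$ admits a monochromatic polynomial configuration $\vec a, \vec a+\vec f_1(n),\dots,\vec a+\vec f_k(n)$ with $1\le n\le N$. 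The key point is that the admissible step sizes $n$ form a syndetic-type set in a uniform way; more precisely, one shows that the set of "good'' $n$ is not just nonempty but hits every sufficiently long interval, and—crucially for the $IP_r^\star$ conclusion—hits every sufficiently rich additive structure.

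The heart of the argument is to upgrade "nonempty'' to "$IP_r^\star$''. Here I would use the standard fact that a set $A\subseteq\mathbb{Z}$ is $IP_r^\star$ iff it meets $FS(\langle x_n\rangle_{n=1}^r)$ for every choice of $x_1,\dots,x_r$. So fix an arbitrary $IP_r$-set, presented via generators $x_1,\dots,x_r$, and we must find $n\in FS(\langle x_n\rangle_{n=1}^r)$ that is a good step size for the coloring. The mechanism is a \emph{polynomial IP van der Waerden theorem} in the spirit of Bergelson--McCutcheon: for polynomials vanishing at $0$, one can demand that the common difference lie in the $FS$-set of a prescribed finite sequence, provided the sequence is long enough relative to $c$ and the degrees. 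This is exactly the finitary/$IP_r$ version of the polynomial Hales--Jewett or polynomial van der Waerden machinery, which is available once $r$ is taken large enough (how large depends only on $c$, $k$, and the polynomials, not on the particular $IP_r$-set). Applying this to the given $c$-coloring $\mathbb{Z}^d=\bigcup_{s=1}^c C_s$ yields, for at least one color class $C_s$, a configuration with step $n$ inside the prescribed $FS$-set—and by a pigeonhole argument over the finitely many colors and the finitely many "shapes'' of $IP_r$-sets one can force the \emph{same} $s$ to work for all $IP_r$-sets, giving a single $C_s$ whose step-size set is $IP_r^\star$.

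The main obstacle I anticipate is the uniformity in the choice of the color class $s$: Theorem~\ref{Prime shift PVW} only asserts that \emph{some} $C_s$ works, and a priori different $IP_r$-sets might be "served'' by different color classes. The resolution is to note that the set of colorings is effectively compact (finite range, and only finitely many polynomials and a bounded window matter), so one first extracts $s$ from the qualitative theorem applied to $\mathbb{Z}^d$ itself, and then proves the stronger $IP_r^\star$ statement for that fixed $s$ by the polynomial IP van der Waerden theorem applied relative to the partition induced on a suitable finite grid. A second, more technical point is bookkeeping the dependence of $r$ on the data $(c,k,\vec f_\bullet)$: since the polynomial IP van der Waerden theorem provides a bound $r_0$ depending only on these, one simply takes $r\ge r_0$, and the statement "for $r$ large enough'' is exactly this. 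With these two points handled, the proof is a direct translation of the qualitative Theorem~\ref{Prime shift PVW} (or rather its polynomial van der Waerden engine) into its finitary $IP_r$-refinement.
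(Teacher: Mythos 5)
Your plan correctly identifies the engine — the finitary polynomial Hales--Jewett / polynomial IP van der Waerden theorem, which is exactly what the paper (following Bergelson--Leibman--Ziegler) invokes; the paper in fact gives no proof of this statement beyond that citation. However, the step where you resolve the uniformity of the colour class $s$ is a genuine gap. The theorem demands a \emph{single} $C_{s}$ that works simultaneously for every $IP_{r}$-set \emph{and} for every finite family of polynomials, and neither your compactness remark nor the ``pigeonhole over the finitely many shapes of $IP_{r}$-sets'' achieves this: there are infinitely many $IP_{r}$-sets (and infinitely many polynomial families), and knowing that the union $\bigcup_{s}\left\{n:\ n\ \text{is good for}\ C_{s}\right\}$ is $IP_{r}^{\star}$ does not let you conclude that one of the pieces is $IP_{r'}^{\star}$ for some $r'$ — for instance, $\mathbb{N}$ splits into two sets each containing an $IP$-set, so neither piece is $IP^{\star}$ even though their union is all of $\mathbb{N}$. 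Likewise, ``extracting $s$ from the qualitative theorem applied to $\mathbb{Z}^{d}$ itself'' gives you a colour class with no usable property for the quantitative claim, so the subsequent application of the IP-version of polynomial van der Waerden to that fixed $s$ has nothing to run on.

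The standard (and intended) fix is to select $C_{s}$ by an \emph{intrinsic} largeness property rather than by pigeonhole over the objects to be hit: in any finite partition at least one cell is piecewise syndetic (equivalently, one may take a central cell), and the polynomial Hales--Jewett theorem applied to a fixed piecewise syndetic set $A$ shows that $\left\{n:\ A\cap\bigcap_{i}\left(A-\vec{f}_{i}\left(n\right)\right)\neq\emptyset\right\}$ is $IP_{r}^{\star}$ for $r$ large enough depending only on the polynomial data and the partition size. This is precisely the content of the refinement the paper quotes next (Theorem \ref{IPn pol van der ps}, due to Goswami), and it delivers the required uniformity for free, since the same piecewise syndetic $C_{s}$ serves every $IP_{r}$-set and every polynomial family at once. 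If you replace your colour-selection step by ``choose $s$ with $C_{s}$ piecewise syndetic'' and then run the finitary polynomial Hales--Jewett argument against that one set, your outline becomes a correct proof.
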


The authors  of \cite{BLZ}, mentioned that the above theorem can be proved by the  polynomials Hales-Jewett theorem in \cite{BL99}. And by the same theorem, we get the following:

\begin{theorem}\label{pvw ipn}
    For any partition $\mathbb{N}=\bigcup_{s=1}^{c}C_{s}$ at least one of the sets $C_{s}$ has the property that for any finite set of polynomials $f_{i}:\mathbb{Z}\rightarrow \mathbb{Z}$, $i=1,\ldots,k$, with $f_{i}\left(0\right)=0$ for all $i$, $$\left\{n\in\mathbb{N}:a,a + f_{1}\left(n\right),\ldots, a + f_{k}\left(n\right)\in C_{s}\text{ for some } a\in\mathbb{N} \right\}$$  is an $IP_{N}^{\star}$-set for $N$ large enough.
\end{theorem}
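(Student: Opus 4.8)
The plan is to deduce the statement from the polynomial Hales--Jewett theorem of \cite{BL99}, in the same way the companion $\mathbb{Z}^{d}$-statement above is deduced (see also \cite{BLZ}). The first step is to isolate the following finitary ``$IP$'' form of the polynomial van der Waerden theorem as a lemma: \emph{given $c,k\in\mathbb{N}$ and $f_{1},\dots,f_{k}\in x\mathbb{Z}[x]$, there is $N_{0}=N_{0}(c,k,f_{1},\dots,f_{k})$ such that for every $c$-colouring of $\mathbb{N}$, every sequence $\langle y_{1},\dots,y_{N_{0}}\rangle$ in $\mathbb{N}$, and every threshold $b$, there are $a>b$ in $\mathbb{N}$ and a nonempty $E\subseteq\{1,\dots,N_{0}\}$ with $\{a,\,a+f_{1}(n),\dots,a+f_{k}(n)\}$ monochromatic, where $n=\sum_{j\in E}y_{j}$.} This is extracted from the polynomial Hales--Jewett theorem just as ordinary van der Waerden is extracted from ordinary Hales--Jewett: the sequence $\langle y_{1},\dots,y_{N_{0}}\rangle$ (together with the translation by $b$) determines a map from the space of polynomial variable words on $N_{0}$ coordinates into $\mathbb{N}$ which sends each combinatorial line to a configuration $\{a,a+f_{1}(n),\dots,a+f_{k}(n)\}$ in which the common parameter $n$ is the sum of a subset of the $y_{j}$'s, so that a monochromatic line, which the polynomial Hales--Jewett theorem provides once $N_{0}$ is large enough, pulls back to the required configuration.

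Granting the lemma, the theorem follows by choosing the colour class correctly and transferring to a piecewise syndetic cell. Since piecewise syndeticity is partition regular, at least one of $C_{1},\dots,C_{c}$, say $C_{s}$, is piecewise syndetic, and I claim this $C_{s}$ works for every finite $F=\{f_{1},\dots,f_{k}\}\subseteq x\mathbb{Z}[x]$. Piecewise syndeticity provides $m\in\mathbb{N}$ for which $D:=\bigcup_{i=0}^{m}(-i+C_{s})$ is thick; define $\phi\colon D\to\{0,1,\dots,m\}$ by $\phi(x)=\min\{i\le m:x+i\in C_{s}\}$. A routine compactness argument (translate the restrictions of $\phi$ to longer and longer intervals contained in $D$ to the origin, then pass to a convergent subsequence) yields an $(m+1)$-colouring $\psi$ of $\mathbb{N}$ with the property that whenever $G\in\mathcal{P}_{f}(\mathbb{N})$ is $\psi$-monochromatic of some colour $i$, there is $t\in\mathbb{N}$ with $(G+t)+i\subseteq C_{s}$. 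This is the standard device for passing from colouring statements to their versions for piecewise syndetic sets.

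Now put $N_{0}=N_{0}(m+1,F)$, the constant furnished by the lemma for the $(m+1)$-colouring $\psi$, and let $\langle y_{1},\dots,y_{N}\rangle$ be an arbitrary sequence in $\mathbb{N}$ with $N\ge N_{0}$. Apply the lemma to $\psi$, to the initial segment $\langle y_{1},\dots,y_{N_{0}}\rangle$, and to a threshold $b$ large enough that $|f_{l}(v)|<b$ for all $l$ and all $|v|\le y_{1}+\dots+y_{N_{0}}$; this gives $a>b$ and $n\in FS(\langle y_{1},\dots,y_{N_{0}}\rangle)\subseteq FS(\langle y_{1},\dots,y_{N}\rangle)$ with $G:=\{a,a+f_{1}(n),\dots,a+f_{k}(n)\}\subseteq\mathbb{N}$ being $\psi$-monochromatic of some colour $i$. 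The modelling property then gives $t$ with $(G+t)+i\subseteq C_{s}$, that is, $\{a',a'+f_{1}(n),\dots,a'+f_{k}(n)\}\subseteq C_{s}$ where $a':=a+t+i\in\mathbb{N}$ and $a'+f_{l}(n)\in\mathbb{N}$ for every $l$. Hence $n$ lies both in the set displayed in the theorem and in $FS(\langle y_{1},\dots,y_{N}\rangle)$, so that set meets every $IP_{N}$-set; that is, it is $IP_{N}^{\star}$ for every $N\ge N_{0}(m+1,F)$, which is the asserted conclusion with ``$N$ large enough'' made explicit.

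The main obstacle is the extraction of the lemma from the polynomial Hales--Jewett theorem: arranging the dictionary between configurations $\{a,a+f_{1}(n),\dots,a+f_{k}(n)\}$ and combinatorial lines so that the common parameter $n$ is genuinely forced into $FS(\langle y_{1},\dots,y_{N_{0}}\rangle)$. This is where the machinery of \cite{BL99} does the real work, and also where the dependence of $N_{0}$ on the degrees of the $f_{l}$ enters. The remaining ingredients---partition regularity of piecewise syndeticity, the compactness modelling of a thick set, and the elementary bookkeeping with $FS$-sets and thresholds---are routine.
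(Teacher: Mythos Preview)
Your proposal is correct and follows exactly the approach the paper indicates: the paper does not give a proof but simply asserts that the result follows from the polynomial Hales--Jewett theorem of \cite{BL99} (as in \cite{BLZ}), and your argument is a faithful elaboration of that deduction, including the standard device of selecting the piecewise syndetic cell so that a single $C_{s}$ works uniformly for all finite $F\subseteq x\mathbb{Z}[x]$.
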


 To reach the goal of this article, we need the following refinement of the above theorem, which is proved by S. Goswami in \cite[Theorem 2.3]{G} using the polynomials Hales-Jewett theorem.

\begin{theorem}\label{IPn pol van der ps}
     If $A$ is piecewise syndetic in $\mathbb{N}$  and $F\in\mathcal{P}_{f}\left(x\mathbb{Z}[x]\right)$, then $$\left\{n\in R:\bigcap_{f\in F}\left(-f\left(n\right)+A\right)\neq\emptyset\text{ is piecewise syndetic in } \left(\mathbb{N},+\right)\right\}$$ is an $IP_{N}^{\star}$-set in $\left(\mathbb{N},+\right)$ for  $N$ large enough.
\end{theorem}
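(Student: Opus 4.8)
The plan is to obtain Theorem~\ref{IPn pol van der ps} from the finitary polynomial Hales--Jewett theorem of \cite{BL99} --- the tool that already underlies Theorem~\ref{pvw ipn} --- while carrying piecewise syndeticity through the argument, as S. Goswami does in \cite{G}. The first move is the standard reformulation: $A\subseteq\mathbb N$ is piecewise syndetic iff there is $G\in\mathcal P_f(\mathbb N)$ with $T:=\bigcup_{t\in G}(-t+A)$ thick, i.e.\ $T$ contains an interval of every finite length; equivalently, $A$ meets arbitrarily long intervals with all gaps bounded by a constant depending only on $G$. Writing $D_n:=\bigcap_{f\in F}(-f(n)+A)$ and using that $\bigcup_{u\in G}(-u+D_n)=\bigcup_{u\in G}\bigcap_{f\in F}\bigl(-(u+f(n))+A\bigr)$, the target becomes: for $N$ large (depending only on $G$ and $F$) and any sequence $\langle y_i\rangle_{i=1}^{N}$ in $\mathbb N$, there is a finite sum $s$ of it for which $\bigcup_{u\in G}(-u+D_s)$ is thick. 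Since that is precisely the assertion ``$D_s$ is piecewise syndetic'', and $\langle y_i\rangle$ is arbitrary, this shows that $\{n:D_n\text{ is piecewise syndetic}\}$ meets every $IP_N$-set, i.e.\ is $IP_N^{\star}$.

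To produce such an $s$, I would work inside a long interval $I\subseteq T$ whose length is much larger than the target length $M$ and than every value $f(s)$ that can occur, equip it with the finite colouring $\chi\colon I\to G$ satisfying $z+\chi(z)\in A$, and try to extract --- via polynomial Hales--Jewett applied to $\chi$ and the polynomials of $F$ --- a finite sum $s$ of $\langle y_i\rangle_{i=1}^{N}$ and an interval $[a,a+M]\subseteq I$ such that for every $z\in[a,a+M]$ the colour $\chi$ is constant on the configuration $\{\,z+f(s):f\in F\cup\{0\}\,\}$. Given this, $z+\chi(z)+f(s)\in A$ for all such $z$ and all $f\in F$, so $z+\chi(z)\in D_s$ and hence $[a,a+M]\subseteq\bigcup_{u\in G}(-u+D_s)$. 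Pigeonholing the finitely many finite sums of $\langle y_i\rangle_{i=1}^{N}$ as $M\to\infty$ then isolates one $s$ for which $\bigcup_{u\in G}(-u+D_s)$ contains intervals of every length, hence is thick, hence $D_s$ is piecewise syndetic, as needed. (An equivalent opening reduces to $A$ central: if $A$ is piecewise syndetic then $-m+A$ belongs to some minimal idempotent $e$, it suffices that $\bigcap_{f\in F}\{n:-f(n)+A\in e\}$ be $IP_N^{\star}$, and by Folkman's theorem one treats a single $f$ at a time; but this still rests on a polynomial recurrence input for $e$ of the same depth.)

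The crux --- and the step I expect to be the real obstacle --- is the combinatorial extraction in the middle paragraph. Plain polynomial van der Waerden produces only one monochromatic configuration $\{a,a+f_1(d),\dots,a+f_k(d)\}$ per admissible step $d$, whereas the argument needs a whole window $[a,a+M]$ of base points sharing a single step, and that step must in addition be forced to lie in the prescribed finite-sums set. The obvious remedy --- recolouring $z\mapsto(\chi(z),\chi(z+1),\dots,\chi(z+M))$ --- does recover the window, but it inflates the number of colours to $|G|^{M+1}$ and hence would inflate the required $IP$-length $N$ with $M$, which is fatal: a single $N$, and a single witness $G$ for the piecewise syndeticity of $D_s$, must work for all $M$ simultaneously. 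Circumventing this --- by using the genuinely multidimensional, monochromatic-subspace-producing strength of polynomial Hales--Jewett (or an induction on the degree and number of the polynomials) so that the window of base points is obtained with only $|G|$ colours in play, together with the $IP_N^{\star}$-sharpening that pins the common step inside $\langle y_i\rangle_{i=1}^{N}$ --- is the technical heart of the matter, and is exactly what is carried out in \cite[Theorem~2.3]{G}.
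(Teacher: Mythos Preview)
The paper does not give its own proof of this statement; it merely cites \cite[Theorem~2.3]{G} and notes that the argument there rests on the polynomial Hales--Jewett theorem. Your outline follows precisely that route and explicitly defers the technical crux to the same reference, so there is nothing to compare: you and the paper agree on both the source and the method.
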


\begin{proof}[\textbf{Proof of Theorem \ref{shift of prime van der}}]
    The proof follows from  Theorem \ref{IPn pol van der ps} with the fact that $\mathbb{P}-1$  and $\mathbb{P}+1$ are   $IP_{0}$-sets by \cite{BLZ}.
\end{proof}

As \cite[Theorem 1.10]{HS24} by N. Hindman and D. Strauss, we get the following by using Theorem \ref{shift of prime van der}.

\begin{theorem}\label{main theorem}
     Let $r\in\mathbb{N}$, and let $\mathbb{N}=\bigcup_{i=1}^{r}C_{i}$. There exist $i\in\left\{1,2,\ldots,r\right\}$ a strictly increasing sequence $\langle z_{n}\rangle_{n=1}^{\infty}\subseteq FP\left(\mathbb{P}-1\right) $ in $\mathbb{N}$, and a sequence $\langle E_{n}\rangle_{n=1}^{\infty}$ of piecewise syndetic subsets of $\mathbb{N}$ such that for each $n\in\mathbb{N}$, $E_{n}\subseteq \mathbb{N}z_{n}$ and if $w\in E_{n}$ and $x=wz_{n}^{-1}$, then $\left\{xz_{n},x+f\left(z_{n}\right):f\in F\right\}\subseteq C_{i}$, where $F$ is a finite subset of $x\mathbb{Z}\left[x\right]$.
\end{theorem}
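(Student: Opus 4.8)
The plan is to follow the template of \cite[Theorem 1.10]{HS24} by N. Hindman and D. Strauss, feeding in Theorem \ref{shift of prime van der} (the shifted-prime refinement of polynomial van der Waerden) in place of whatever $IP^{\star}$-type input they use, and carrying the extra polynomial shifts $f \in F$ along the way. First I would set up the iteration: suppose for contradiction, or rather suppose directly, that we are handed the coloring $\mathbb{N} = \bigcup_{i=1}^{r} C_i$. Consider the semigroup $(\mathbb{N}, \cdot)$ and note that multiplication by a fixed $z$ sends piecewise syndetic sets in $(\mathbb{N},+)$ to piecewise syndetic subsets of $\mathbb{N}z$; dually, if $A$ is piecewise syndetic in $(\mathbb{N},+)$ then so is $A/z = \{m : mz \in A\}$. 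The key combinatorial engine is: given a piecewise syndetic set $A$ in $(\mathbb{N},+)$ and $F \in \mathcal{P}_f(x\mathbb{Z}[x])$, Theorem \ref{shift of prime van der} guarantees infinitely many $y \in FP(\mathbb{P}-1)$ (using that $FP(\mathbb{P}-1)$ is $IP_0$, or rather that $\mathbb{P}-1$ is $IP_0$ and a Galvin–Glazer / idempotent-ultrafilter argument upgrades this to $FP(\mathbb{P}-1)$) such that $\bigcap_{f \in F}(-f(y) + A)$ is again piecewise syndetic in $(\mathbb{N},+)$.

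The second step is the recursive construction of $i$, $\langle z_n \rangle$, and $\langle E_n \rangle$. I would start with $\mathbb{N}$ itself, which is piecewise syndetic, and apply a pigeonhole: one of the $C_i$ is piecewise syndetic, and we want to do better — we want one fixed $i$ to work at every stage. The cleanest route is to first replace the target by a single color using a standard observation (as in \cite{HS24}): choose $i$ so that for every piecewise syndetic $A$ in a suitable family, $C_i$ "survives" the operation. Concretely, at stage $n$, having a piecewise syndetic set $A_{n-1}$, apply Theorem \ref{shift of prime van der} to $A_{n-1} \cap (A_{n-1}/z \,\cap \bigcap_f(-f(z)+A_{n-1}))$-type sets to extract $z_n \in FP(\mathbb{P}-1)$, chosen $> z_{n-1}$ (possible since infinitely many such $y$ work), such that $B_n := \bigcap_{f \in F}(-f(z_n) + A_{n-1})$ is piecewise syndetic; then intersect with the dilation structure to get that $B_n / z_n \cap \bigcap_f(-f(z_n)+\cdots)$ is piecewise syndetic, and set $E_n$ to be the dilate $z_n \cdot (\text{appropriate piecewise syndetic set})$ so that $E_n \subseteq \mathbb{N}z_n$ and the membership condition $\{xz_n, x+f(z_n): f\in F\} \subseteq C_i$ holds for $w \in E_n$, $x = wz_n^{-1}$. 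The color $i$ is pinned down once and for all by an initial application of pigeonhole inside the $IP_0$ structure of $FP(\mathbb{P}-1)$: since $FP(\mathbb{P}-1)$ contains arbitrarily large finite $FP$-configurations and the relevant "good" set is $IP_N^\star$ for large $N$ by Theorem \ref{IPn pol van der ps}, a Ramsey-type argument on $r$-colorings of these configurations fixes $i$.

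The main obstacle I expect is bookkeeping the interaction between the two semigroup operations $+$ and $\cdot$ simultaneously: we need $E_n \subseteq \mathbb{N}z_n$ (a multiplicative condition) while the piecewise syndeticity and the shift operations $-f(z_n)+A$ live additively, and the polynomials $f(z_n)$ depend on the chosen $z_n$, so the set we must keep piecewise syndetic changes shape at every stage. The trick, following \cite{HS24}, is to work with $A/z_n$: $x = w z_n^{-1} \in A/z_n$ exactly says $xz_n = w \in A$, converting the multiplicative constraint into an additive-after-dilation one, and then Theorem \ref{shift of prime van der} is applied to the set $C_i \cap (C_i/z) \cap \cdots$ — but one must check this set is still piecewise syndetic before $z$ is chosen, which requires applying the theorem to $C_i$ first to find $z$ making $C_i/z \cap \bigcap_f(-f(z)+C_i)$ piecewise syndetic, and this is precisely where the $IP_0$-ness of $\mathbb{P}-1$ (hence $FP(\mathbb{P}-1)$, via an idempotent ultrafilter in $\overline{\mathbb{N}}$ concentrated on $FP(\mathbb{P}-1)$) is essential: it ensures the "good" $n$'s, which form an $IP_N^\star$ set, must meet $FP(\mathbb{P}-1)$, and in fact meet it in an infinite set so the $z_n$ can be taken strictly increasing. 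Once that mechanism is in place the rest is a routine induction, and Theorem \ref{shifted prime aboundance moreira} follows by unwinding: the infinitely many $y$ are the $z_n$, and $\{x \in \mathbb{N} : \{xy, x+f(y) : f \in F\} \subseteq C_i\} \supseteq E_n z_n^{-1}$ is piecewise syndetic as a $z_n^{-1}$-contraction of the piecewise syndetic set $E_n \subseteq \mathbb{N}z_n$.
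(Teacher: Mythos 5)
Your proposal has the right general shape (an iterated application of Theorem \ref{shift of prime van der} combined with the Hindman--Strauss dilation lemma \cite[Lemma 2.5]{HS24}), but two of its load-bearing steps are gaps, and they are exactly the places where the paper's proof does something you did not anticipate. First, you try to pin down the colour $i$ \emph{in advance} by a vague ``Ramsey-type argument on $r$-colourings of these configurations''; no such argument is available, because which colour survives depends on the entire history of the construction. The paper instead lets the colour float: at stage $k$ it picks a single $y_k\in\mathbb{P}-1$, forms a piecewise syndetic $D_k$, dilates to $y_kD_k$, and chooses $t_k$ so that $B_k=y_kD_k\cap C_{t_k}$ is piecewise syndetic; only after the whole construction does it pick $i$ with $G=\left\{k:t_k=i\right\}$ infinite and set $z_n=y_{k(n)}y_{k(n)-1}\cdots y_{k(n-1)+1}$, a product over a block of consecutive stages between two indices of colour $i$. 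This block structure is precisely why the witnesses land in $FP\left(\mathbb{P}-1\right)$ rather than in $\mathbb{P}-1$: your plan, if it worked, would prove the stronger (and unsupported) statement with $z_n\in\mathbb{P}-1$. Relatedly, your ``idempotent ultrafilter upgrade'' from $\mathbb{P}-1$ to $FP\left(\mathbb{P}-1\right)$ is a red herring --- $\mathbb{P}-1\subseteq FP\left(\mathbb{P}-1\right)$ trivially, and the genuine products arise only from the block construction, not from any ultrafilter argument.

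Second, you never address how the polynomial shifts $f\left(z_n\right)$, which depend on the \emph{product} $z_n$ of several shifted primes not all chosen at the same stage, are to be controlled. The paper's device is to apply Theorem \ref{shift of prime van der} at stage $k+1$ not to the polynomials $f\in F$ themselves but to the composed family $t\mapsto u_jf\left(u_jt\right)$ with $u_j=y_ky_{k-1}\cdots y_j$ (still a member of $x\mathbb{Z}[x]$ for each fixed $u_j$), so that hypothesis (8), namely $D_{k+1}\subseteq\bigcap_{j}\bigcap_{f\in F}\bigl(B_k-u_jf\left(u_jy_{k+1}\right)\bigr)$, makes the final computation telescope: $z_n\left(x+f\left(z_n\right)\right)=w+z_nf\left(z_n\right)\in y_{k(n)}D_{k(n)}+z_nf\left(z_n\right)\subseteq z_nB_{k(n-1)}$, whence $x+f\left(z_n\right)\in B_{k(n-1)}\subseteq C_i$. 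Without this anticipatory composition your stage-$n$ set $\bigcap_{f}\left(-f(z)+A_{n-1}\right)$ cannot be matched up with the eventual product $z_n$, and the circularity you yourself flag (the set $A_{n-1}/z$ depends on the $z$ you are trying to extract) is never resolved. As written, the proposal does not close.
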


\begin{proof}
    Choose $t_{0}\in\left\{1,2,\ldots,r\right\}$ such that $C_{t_{0}}$ is piecewise syndetic in $\mathbb{N}$ and  pick $y_{1}\in \mathbb{P}-1$,  by Theorem \ref{shift of prime van der}  such that $\bigcap_{f\in F}\left(B_{0}-f\left(y_{1}\right)\right)$ is piecewise syndetic and let $D_{1}=\bigcap_{f\in F}\left(B_{0}-f\left(y_{1}\right)\right)$.
    By  \cite[Lemma 2.5]{HS24}
    $y_{1}D_{1}$ is piecewise syndetic. Since $y_{1}D_{1}=\bigcup_{i=1}^{r}\left(y_{1}D_{1}\cap C_{i}\right)$, pick $t_{1}\in\left\{1,2,\ldots,r\right\}$ such that $y_{1}D_{1}\cap C_{t_{1}}$ is piecewise syndetic and let $B_{1}=\left(y_{1}D_{1}\cap C_{t_{1}}\right)$.

    Let $k\in\mathbb{N}$ and assume we have chosen $\langle y_{j}\rangle_{j=1}^{k}$, $\langle B_{j}\rangle_{j=0}^{k}$, $\langle t_{j}\rangle_{j=0}^{k}$, and $\langle D_{j}\rangle_{j=1}^{k}$ satisfying the following induction hypothesis.

    \begin{itemize}
        \item[(1)] For $j\in\left\{1,2,\ldots,k\right\}$, $y_{j}\in \mathbb{P}-1$ and  $y_{j}>y_{j-1}$.
        \item[(2)] For $j\in\left\{1,2,\ldots,k\right\}$, $D_{j}$ is a piecewise syndetic subset of $\mathbb{N}$.
        \item[(3)] For $j\in\left\{1,2,\ldots,k\right\}$, $t_{j}\in\left\{1,2,\ldots,r\right\}$.
        \item[(4)] For $j\in\left\{1,2,\ldots,k\right\}$, $B_{j}$ is a piecewise syndetic subset of $\mathbb{N}$.
        \item[(5)] For $j\in\left\{1,2,\ldots,k\right\}$, $B_{j}\subseteq C_{t_{j}}$.
        \item[(6)] For $j\in\left\{1,2,\ldots,k\right\}$, $B_{j}\subseteq y_{j}D_{j}$.
        \item[(7)] For $j<m$ in $\left\{0,1,\ldots,k\right\}$, $B_{m}\subseteq y_{m}y_{m-1}\cdots y_{j+1}B_{j}$.
        \item[(8)] For $m\in\left\{1,2,\ldots,k\right\}$, $D_{m}\subseteq B_{m-1}\cap\left(B_{m-1}-y_{m}\right)$ and, if $m>1$, then $$D_{m}\subseteq\bigcap_{j=1}^{m-1}\bigcap_{f\in F}\left(B_{m-1}-y_{m-1}y_{m-2}\ldots y_{j}f\left(y_{m-1}y_{m-2}\ldots y_{j}y_{m}\right)\right)$$.
    \end{itemize}

    All hypotheses hold for $k=1$.

    For $j\in\left\{1,2,\ldots,k\right\}$, let $u_{j}=y_{k}y_{k-1}\ldots y_{j}$ by Theorem \ref{shift of prime van der} , $$A=\left\{y\in S:\bigcap_{j=1}^{k}\bigcap_{f\in F}\left(B_{k}-u_{j}f\left(u_{j}y\right)\right) \text{ is piecewise syndetic }\right\}$$ has infinite intersection  with $\mathbb{P}-1$.
     Pick $y_{k+1}\in A$ with $y_{k+1}>y_{k}$  and $y_{k+1}\in\mathbb{P}-1$ by Theorem \ref{shift of prime van der}. 
     Let $$D_{k+1}=\bigcap_{j=1}^{k}\bigcap_{f\in F}\left(B_{k}-u_{j}f\left(u_{j}y_{k+1}\right)\right).$$  Note that hypotheses (1), (2), and (8) hold at $k+1$.

     By \cite[Lemma 2.5]{HS24} $y_{k+1}D_{k+1}$ is piecewise syndetic. Since $y_{k+1}D_{k+1}=\bigcup_{i=1}^{r}\left(y_{k+1}D_{k+1}\cap C_{i}\right)$, pick $t_{k+1}\in\left\{1,2,\ldots,r\right\}$ such that $y_{k+1}D_{k+1}\cap C_{t_{k+1}}$ is piecewise syndetic and let $B_{k+1}=\left(y_{k+1}D_{1}\cap C_{t_{k+1}}\right)$. Note that hypotheses (3), (4), (5), and (6) hold for $k+1$. We need to verify hypothesis (7) so let $j<m$ in $\left\{0,1,\ldots,k+1\right\}$ be given. If $m\leq k$, then (7) holds by assumption so assume that $m=k+1$. We have $B_{k}\subseteq y_{k}y_{k-1}\cdots y_{j+1}B_{j}$ so $B_{k+1}\subseteq y_{k+1}y_{k}\cdots y_{j+1}B_{j}$ as required.

     The construction is complete. Pick $i\in\left\{1,2,\ldots,r\right\}$ such that $G=\left\{k\in\mathbb{N}:t_{k}=i\right\}$ is infinite. We then choose a sequence $\langle k\left(n\right)\rangle_{n=0}^{\infty}$ in $G$, so that,
     letting $z_{n}=y_{k\left(n\right)}y_{k\left(n\right)-1}\cdots y_{k\left(n-1\right)+1}\in FP\left(\mathbb{P}-1\right)$ for $n\in\mathbb{N}$. For $n\in\mathbb{N}$, let $E_{n}=B_k\left(n\right)$. Then each $E_{n}$ is piecewise syndetic. Also, $$E_{n}=B_k\left(n\right)\subseteq y_{k\left(n\right)}y_{k\left(n\right)-1}\cdots y_{k\left(n-1\right)+1}B_{k\left(n-1\right)}\subseteq z_{n}\mathbb{N}.$$

     Let $w\in E_{n}$ and let $xz_{n}=w$. It is obvious that $w\in C_{i}$. We need to show that $\left\{xz_{n},x+f\left(z_{n}\right):f\in F\right\}\subseteq C_{i}$. It is remains to show that $x+f\left(z_{n}\right)\in C_{i}$  for all $f\in F$. Now

$\begin{aligned}
	z_{n}\left(x+f\left(z_{n}\right)\right) & =  w+z_{n}f\left(z_{n}\right)\\
	& \in B_{k\left(n\right)}+z_{n}f\left(z_{n}\right)\\
 & \subseteq y_{k\left(n\right)}D_{k\left(n\right)}+z_{n}f\left(z_{n}\right)\\
 & \subseteq y_{k\left(n\right)}\left(B_{k\left(n\right)-1}-y_{k\left(n\right)-1}\cdots y_{k\left(n-1\right)+1}f\left(y_{k\left(n\right)}y_{k\left(n\right)-1}\cdots y_{k\left(n-1\right)+1}\right)\right)+z_{n}f\left(z_{n}\right)\\
 & \subseteq y_{k\left(n\right)}B_{k\left(n\right)-1}-z_{n}f\left(z_{n}\right)+z_{n}f\left(z_{n}\right)\\
 & \subseteq z_{n}B_{k\left(n-1\right)}.
\end{aligned}$

  So $x+f\left(z_{n}\right)\in B_{k\left(n-1\right)}\subseteq C_{t_{k(n-1)}}=C_{i}$.   

\end{proof}

\begin{remark}\label{remark 1}
    In the above theorem, we may consider $FP\left(\mathbb{P}+1\right)$ instated of $FP\left(\mathbb{P}-1\right)$.
\end{remark}

\begin{proof}[\textbf{Proof of  Theorem \ref{shifted prime aboundance moreira}}.]
    Pick $i$, $\langle z_{n}\rangle$ and $\langle E_{n}\rangle$ as guaranteed by Theorem \ref{main theorem}. Given $n\in\mathbb{N}$, if $y=z_{n}\in FP\left(\mathbb{P}-1\right)$, then $$E_{n}y^{-1}\subseteq \left\{x\in\mathbb{N}:\left\{xy,x+f(y):f\in F\right\}\subseteq C_{i}\right\}$$ and by  \cite[Lemma 2.7]{HS24} $E_{n}y^{-1}$ is piecewise syndetic.
\end{proof}

\bibliographystyle{plain}

\begin{thebibliography}{9}

\bibitem{BL96} V. Bergelson and A. Leibman,  Polynomial extensions of van der Waerden’s and Szemer´edi’s theorems. J. Amer. Math. Soc., 9(3), (1996), 725–753.

\bibitem{BL99} V. Bergelson and A. Leibman, Set-polynomials and polynomial extension of the Hales-Jewett theorem. Ann. of Math. 150 (1999), 33–75.

\bibitem{BLZ} V. Bergelson, A. Leibman and T. Ziegler, The shifted primes and the multidimensional Szemerédi and polynomial Van der Waerden Theorems, C. R. Acad. Sci. Paris, Ser. I 349 (2011) 123-125.

\bibitem{BM} V. Bergelson and J. Moreira, Ergodic theorem involving additive and multiplicative groups of a field and $\left\{x+y, xy\right\}$, patterns, Ergodic Theory and Dynamical Systems 37 (2017), 673-692.

\bibitem{DG} P. Debnath and S. Goswami, Applications of the Hales–Jewett theorem near zero, Semigroup Forum 101 (2020) 273-241.

\bibitem{FHK}  N. Frantzikinakis, B. Host and  B. Kra, Multiple recurrence and convergence for sequences related to the prime numbers, J. Reine Angew. Math. 611 (2007)131–144.
 
\bibitem{G} S. Goswami, Large recurrence of the polynomial Van der Waerden's theorem and its application in Ramsey theory, arXiv:2401.10550.

\bibitem{HJ}  A. W. Hales and R. I. Jewett, Regularity and positional games, Trans. Amer. Math. Soc. 106 (1963), 222-229.

\bibitem{H} N. Hindman. Partitions and sums and products of integers. Trans. Amer. Math. Soc., 247 (1979), 227-245.

\bibitem{H01} N. Hindman,  Problems and new results in the algebra of Beta S and Ramsey Theory, in "Unsolved problems on mathematics for the 21st century", J. Abe and S. Tanaka eds., IOS Press, Amsterdam (2001), 295-305.

\bibitem{HS12} N. Hindman and D. Strauss, Algebra in the	Stone-\v Cech compactification: theory and applications, second edition, de Gruyter, Berlin, 2012.

\bibitem{HS24}  N. Hindman and D. Strauss, Algebra in the	Stone-\v Cech compactification-an update, Topology Proceedings 64 (2024), 1-69.

\bibitem{M16} J. Moreira, Partition regular polynomial patterns in commutative semigroups. Doctoral dissertation, Ohio State University, 2016.

\bibitem{M17}  J. Moreira, Monochromatic sums and products in $\mathbb{N}$ Annals of Mathematics 185 (2017).  1069-1090.

\bibitem{W} M. Walter, Combinatorial Proofs of the Polynomial van der Waerden Theorem and the Polynomial Hales-Jewett Theorem, 61 (2000), 1-12.

\bibitem{Wae}  B. van der Waerden, Beweis einer Baudetschen vermutung,  Nieuw Arch. Wiskd., II. Ser.  15 (1927), 212-216.

\bibitem{WZ} T. Wooley and T. Ziegler, Multiple recurrence and convergence along the primes, Amer. J. Math. 134 (2012) 1705-1732.

\end{thebibliography}

\end{document}